\documentclass[12pt,reqno]{amsart}

\usepackage[utf8]{inputenc}
\usepackage[T1]{fontenc}
\usepackage{amsfonts,amssymb}
\usepackage{microtype}
\usepackage[colorlinks=true, allcolors=blue]{hyperref}
\usepackage[margin=1in]{geometry}
\usepackage{mathrsfs}
\usepackage{graphicx}

\theoremstyle{plain}
\newtheorem{lemma}{Lemma}
\newtheorem{theorem}{Theorem}

\newtheorem{corollary}{Corollary}

\theoremstyle{definition}
\newtheorem{remark}{Remark}

\newcommand{\T}{\mathbb T}
\newcommand{\ZM}{\mathbb Z_M}
\newcommand{\norm}[1]{\left\lVert#1\right\rVert}
\newcommand{\rank}{\operatorname{rank}}

\begin{document}

\title[Nonrigidity of the trigonometric system]{Nonrigidity of the trigonometric system in $L_p$\\ for $1\le p<2$}
\author{Arthur Sinai}
\date{}

\sloppy

\begin{abstract}
We prove that the trigonometric system $\mathcal{E}_N = \{e_k\}_{|k| \le N}$ is not rigid in $L_p(\mathbb{T})$ for $1 \le p < 2$. That is, as $N\to\infty$, all its elements are approximated with error $o(1)$ by a subspace of dimension $o(N)$.
\end{abstract}

\maketitle

\section{Introduction and Preliminaries}

The Kolmogorov width of order $n$ of a bounded set $W$ in a normed space $X$ is the quantity
\[
d_n(W,X)=\inf_{L\subset X}\ \sup_{x\in W}\ \inf_{y\in L}\norm{x-y}_X,
\]
where the infimum is taken over the linear subspaces $L$ of $X$ of dimension at most $n$.

A system of vectors of a normed space is said to be rigid if all its elements can be approximated with small error only by subspaces whose dimension is of the order of the number of vectors (see~\cite{1}--\cite{3}). Let $\T=\mathbb R/\mathbb Z$ be equipped with Lebesgue measure normalized by $\int_{\T}dt=1$; let $L_p(\T)$, $1\le p<\infty$, be the space of measurable complex-valued functions with finite norm $\norm{f}_{L_p(\T)}=\bigl(\int_{\T}|f|^p\bigr)^{1/p}$; and let $e(t)=e^{2\pi it}$, $e_k(t)=e(kt)$ and $\mathcal E_N=\{e_k\mid|k|\le N\}\subset L_p(\T)$.

In~\cite[\S~6]{1} the question is raised whether the trigonometric system can be approximated in $L_1(\T)$ with error o(1) by subspaces of dimension o(N):
\[
d_n\bigl(\mathcal E_N,L_1(\T)\bigr)=o(1)\quad\text{for a suitable }n=o(N),\ N\to\infty\,?
\]

It is well known that in $L_2$ any orthonormal system of $N$ functions is rigid, namely, $d_n(\{\varphi_1,\ldots,\varphi_N\},L_2)=\sqrt{1-n/N}$ (see, for example, the survey~\cite{4}). In weaker metrics orthogonality alone is no longer enough, but rigidity can be provided by probabilistic properties of the system. The independence of the functions implies rigidity in $L_1$ and even in $L_0$~\cite[Theorems~1.1 and~1.2]{1}, and the same is true for unconditional sets and vectors~\cite{3}; the classical example of an independent system is the Rademacher system~\cite{5}. Lacunary systems behave like independent ones~\cite{6} and are therefore rigid as well~\cite[Statement~3.3]{1}, while for $1<p<2$ the weaker condition $S_{p'}$ suffices~\cite[Theorem~1.3]{1}. The matrix version of the same problem comes from complexity theory. The rigidity of a matrix, that is, the distance in the Hamming metric to the matrices of small rank, was introduced by Valiant in 1977 in connection with lower bounds for the complexity of linear circuits (see the survey~\cite{7}). The unexpected result of Alman and Williams on the nonrigidity of the Walsh--Hadamard matrices~\cite{8} made it possible to approximate the Walsh system in $L_p$ for $p<2$ with a polynomial saving in dimension~\cite[Theorem~1]{2}, while Dvir and Liu proved that the discrete Fourier transform matrix is not rigid either~\cite{9}.

Throughout what follows $p\in[1,2)$ and $\alpha=\frac1p-\frac12\in\bigl(0,\frac12\bigr]$, where the dependence on $p$ occurs only through $\alpha$; we put $\theta=\frac1{18}\sqrt{\alpha\log N}$. We prove (Theorem~\ref{thm:main}) that for large $N$ there is a dimension $n\le2Ne^{-\theta}$ for which $d_n(\mathcal E_N,L_p(\T))\le\frac8\alpha e^{-\theta}$; since $\theta\to\infty$, while the width is nonincreasing in $n$, this gives a positive answer to the above question at once for the whole range $1\le p<2$. The known lower bound rules out good approximation only for $n\le c\log N$~\cite[\S~6]{1}, so the question of the behaviour in the range between $\log N$ and $Ne^{-\theta}$ remains open.

The approximating subspace is provided by the following arithmetic construction. A square-free modulus $M$ with a large number of sufficiently large prime divisors is taken, the Fourier matrix $F_M$ is decomposed, by the Chinese remainder theorem, into a tensor product of matrices $F_q$ with prime $q$, and a suitable $\lambda_qI$ is subtracted from each factor $F_q$. The rank of a factor thereby drops by at least a quarter, whereas the error of the approximation is only $q^{-\alpha}$ and is small precisely when $p<2$; hence the rank decreases exponentially in the number of factors while the total error stays small.

The decomposition of the Fourier matrix into a tensor product of factors of pairwise coprime orders, with a reduction of the rank of each factor, goes back to~\cite[Lemmas~4.8 and~4.9]{9}, where the nonrigidity of the discrete Fourier transform matrix was proved in the sense of Valiant, that is, in the Hamming metric. Here, however, the rank of a factor is reduced in a different way, adapted to the $\ell_p$-norm, which makes it possible to work with arbitrary sufficiently large prime divisors instead of the special choice made in~\cite[\S~4]{9}. For the Walsh system, as noted above, a stronger, polynomial estimate is known, and the same distinction between these two systems is visible in the problem of matrix rigidity.

\section{Main Results}

We carry out the reduction to a discrete problem by means of step functions. For a positive integer $M$ consider the space $\mathbb C^{\ZM}$, $\ZM=\mathbb Z/M\mathbb Z$, with the averaged norm 
\[
\norm{u}_{\ell_p(\ZM)}=\Bigl(\frac1M\sum_{b\in\ZM}|u_b|^p\Bigr)^{1/p}
\]
and the system of characters $\mathcal E^M=\{\varepsilon_a\mid a\in\ZM\}$, where $\varepsilon_a=\bigl(e(ab/M)\bigr)_{b\in\ZM}$. The superscript distinguishes the discrete system from the continuous system $\mathcal E_N$. The normalizations in $L_p(\T)$ and in $\ell_p(\ZM)$ are consistent with each other, namely, the function identically equal to one and the vector all of whose coordinates are equal to one have norm $1$.

\begin{lemma}\label{lem:step}
For all integers $M\ge1$, $N\ge0$ and $n\ge0$
\[
d_n\bigl(\mathcal E_N,L_p(\T)\bigr)
\le d_n\bigl(\mathcal E^M,\ell_p(\ZM)\bigr)+\frac{2\pi N}M.
\]
\end{lemma}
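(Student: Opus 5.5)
Fix an optimal (or near-optimal) subspace $L\subset\mathbb C^{\ZM}$ of dimension at most $n$ for $\mathcal E^M$, and transfer it to $L_p(\T)$ through the step-function embedding
\[
J\colon\mathbb C^{\ZM}\to L_p(\T),\qquad (Ju)(t)=u_b\quad\text{for } t\in\bigl[\tfrac bM,\tfrac{b+1}M\bigr),\ b=0,\dots,M-1.
\]
Since each interval has measure $1/M$, the map $J$ is an isometry from $\ell_p(\ZM)$ into $L_p(\T)$. Hence $J(L)$ is a subspace of $L_p(\T)$ of dimension at most $n$, and $\operatorname{dist}(Ju,J(L))\le\operatorname{dist}(u,L)$ for every vector $u$.

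Each $e_k$ with $|k|\le N$ will be compared with $J\varepsilon_a$, where $a\equiv k\pmod M$. On the interval $[b/M,(b+1)/M)$ the step function $J\varepsilon_a$ equals $e(kb/M)$, because $e(ab/M)$ depends only on $a$ modulo $M$. At the same time, $|e(kt)-e(kb/M)|\le 2\pi|k|\,|t-b/M|\le 2\pi N/M$ for $t$ in that interval, using $|e(x)-e(y)|\le2\pi|x-y|$. Therefore $\norm{e_k-J\varepsilon_a}_{L_p(\T)}\le\norm{e_k-J\varepsilon_a}_{L_\infty}\le 2\pi N/M$. (The sharper bound $\pi N/M$ is available if midpoints are used, but the stated constant already suffices.)

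The triangle inequality then gives
\[
\operatorname{dist}(e_k,J(L))\le \frac{2\pi N}M+\operatorname{dist}(J\varepsilon_a,J(L))\le\frac{2\pi N}M+\operatorname{dist}(\varepsilon_a,L)\le \frac{2\pi N}M+\sup_{a'}\operatorname{dist}(\varepsilon_{a'},L).
\]
Taking the supremum over $|k|\le N$ and then the infimum over $L$ yields the lemma. Since the infimum need not be attained, I would choose $L$ within an arbitrary $\delta>0$ of $d_n(\mathcal E^M,\ell_p(\ZM))$ and let $\delta\to0$. In fact the infimum is attained here by finite-dimensional compactness, but the $\delta$-argument avoids having to check this.

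There is no real obstacle in this argument. The only points needing care are that $J$ is exactly an isometry, which holds because of the averaged normalization, and that the same residue class $a$ works uniformly for all $b$, which holds because $e(ab/M)=e(kb/M)$ whenever $a\equiv k\pmod M$.
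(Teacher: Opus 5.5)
Your proposal is correct and follows essentially the same route as the paper's proof. Both arguments use the isometric step-function embedding of $\ell_p(\ZM)$ into $L_p(\T)$, the pointwise bound $|e_k(t)-e(kb/M)|\le 2\pi N/M$ on each cell with $a\equiv k \pmod M$, the triangle inequality, and a $\delta$-approximation of the width in place of exact attainment.
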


\begin{proof}
We partition $\T$ into $M$ cells $I_b=[b/M,(b+1)/M)$, $b=0,\ldots,M-1$, and define a linear operator $E$ from $\ell_p(\ZM)$ to $L_p(\T)$ by setting $(Eu)(t)=u_b$ for $t\in I_b$. Since $|I_b|=1/M$, we have $\norm{Eu}_{L_p(\T)}^p=\frac1M\sum_b|u_b|^p$, that is, $E$ is an isometry and, in particular, an injection. For $t\in I_b$ and $|k|\le N$
\[
|e_k(t)-e(kb/M)|\le2\pi|k|\,|t-b/M|\le\frac{2\pi N}M,
\]
while the values $e(kb/M)$, $b\in\ZM$, form precisely the set of coordinates of the vector $\varepsilon_{k\bmod M}$; consequently,
\begin{equation}\label{eq:step-err}
\norm{e_k-E\varepsilon_{k\bmod M}}_{L_p(\T)}\le\frac{2\pi N}M.
\end{equation}
Take an arbitrary $\delta>d_n(\mathcal E^M,\ell_p(\ZM))$ and a subspace $\mathcal L\subset\ell_p(\ZM)$ of dimension $\le n$ with $\sup_a\inf_{y\in\mathcal L}\norm{\varepsilon_a-y}_{\ell_p(\ZM)}\le\delta$. Put $L=E(\mathcal L)$; then $\dim L\le n$, and, since $E$ is an isometry, we obtain from~\eqref{eq:step-err}, for every $|k|\le N$,
\[
\inf_{g\in L}\norm{e_k-g}_{L_p(\T)}
\le\frac{2\pi N}M+\inf_{y\in\mathcal L}\norm{E\varepsilon_{k\bmod M}-Ey}_{L_p(\T)}
\le\frac{2\pi N}M+\delta.
\]
Hence $d_n(\mathcal E_N,L_p(\T))\le2\pi N/M+\delta$, and it remains to let $\delta$ tend to $d_n(\mathcal E^M,\ell_p(\ZM))$.
\end{proof}

From now on we work with the discrete problem. The rows of the Fourier matrix $F_q=\bigl(e(ab/q)\bigr)_{a,b\in\mathbb Z_q}$, taken without normalization, are precisely the characters $\varepsilon_a$, and all of them have norm $1$ in $\ell_p(\mathbb Z_q)$, since all the entries have modulus $1$. We denote the identity matrix by $I$.

Everywhere the norm of a matrix is understood row-wise: for 
$A=(A_{ab})_{a,b\in\mathbb Z_q}$ we set
\begin{equation}\label{eq:matrix-norm}
\norm{A}_{\ell_p(\mathbb Z_q)}
=\max_{a\in\mathbb Z_q}\norm{(A_{ab})_{b\in\mathbb Z_q}}_{\ell_p(\mathbb Z_q)}.
\end{equation}
For a matrix consisting of a single row the right-hand side is exactly the norm of this row regarded as a vector, so the notation agrees with the previous one and we use one and the same symbol throughout. This is indeed a norm, being the maximum of the norms of the rows. In particular, the triangle inequality holds for it. It does not change under permutations of rows and columns, and what was said above means that $\norm{F_q}_{\ell_p(\mathbb Z_q)}=1$. We first reduce the rank of the matrix $F_q$, retaining a small error in this norm. The primality of the number $q$ is not used in the next lemma.

\begin{lemma}\label{lem:block}
For every integer $q\ge2$ there exists a matrix $B_q$ of order $q$ such that
\[
\rank B_q\le\frac{3q}4,
\qquad
\norm{F_q-B_q}_{\ell_p(\mathbb Z_q)}=q^{-\alpha},
\qquad
\norm{B_q}_{\ell_p(\mathbb Z_q)}\le1+q^{-\alpha}.
\]
\end{lemma}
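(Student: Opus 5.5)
The plan is to take $B_q=F_q-\lambda I$, where $\lambda$ is a suitably chosen eigenvalue of $F_q$. With this choice $F_q-B_q=\lambda I$ is a scalar matrix, so its norm is trivial to compute. Its rank deficiency equals the multiplicity of $\lambda$, so we want an eigenvalue of $F_q$ of large multiplicity and small modulus relative to the $\ell_p$ normalization.

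First, I will recall the spectral structure of the unnormalized Fourier matrix. Since $(F_q^2)_{ac}=\sum_b e((a+c)b/q)=q\,[a+c\equiv0]$, the matrix $F_q^2$ is $q$ times the permutation $a\mapsto -a$. Hence $F_q^4=q^2I$. Moreover $q^{-1/2}F_q$ is unitary, so $F_q$ is diagonalizable. Therefore all eigenvalues of $F_q$ lie in the set $\{\sqrt q,-\sqrt q,i\sqrt q,-i\sqrt q\}$, and their multiplicities sum to $q$. By pigeonhole, one of these four values, call it $\lambda$, has multiplicity $m\ge q/4$. Because $F_q$ is diagonalizable, $\rank(F_q-\lambda I)=q-m\le 3q/4$. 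Primality of $q$ is never used, which matches the remark before the lemma.

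Next comes the norm computation. Each row of $\lambda I$ has exactly one nonzero entry, of modulus $|\lambda|=\sqrt q$. Its averaged $\ell_p$ norm is therefore
\[
\Bigl(\tfrac1q\,q^{p/2}\Bigr)^{1/p}=q^{1/2-1/p}=q^{-\alpha},
\]
so $\norm{F_q-B_q}_{\ell_p(\mathbb Z_q)}=q^{-\alpha}$ holds with equality. Finally, the triangle inequality for the row-wise norm together with $\norm{F_q}_{\ell_p(\mathbb Z_q)}=1$ gives $\norm{B_q}_{\ell_p(\mathbb Z_q)}\le 1+q^{-\alpha}$.

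There is no real obstacle here. The only point needing care is the justification that the rank drops by the full multiplicity, and this holds because diagonalizability makes algebraic and geometric multiplicities coincide. That in turn follows from the unitarity of $q^{-1/2}F_q$, or alternatively from the fact that $F_q$ annihilates the polynomial $x^4-q^2$, which has distinct roots.
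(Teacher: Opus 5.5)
Your proposal is correct and follows essentially the same route as the paper: it uses $F_q^2=qR$ and $F_q^4=q^2I$, chooses $B_q=F_q-\lambda I$ with $\lambda\in\{\pm\sqrt q,\pm i\sqrt q\}$ by pigeonhole among the four eigenspaces, computes $\norm{\lambda I}_{\ell_p(\mathbb Z_q)}=q^{-\alpha}$ from the single nonzero entry per row, and finishes with the triangle inequality. The only cosmetic difference is that you justify diagonalizability mainly through unitarity of $q^{-1/2}F_q$, whereas the paper uses that $x^4-q^2$ has simple roots, which you also give as an alternative.
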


\begin{proof}
It is verified directly that $F_q^2=qR$, where $R$ is the matrix of the permutation $a\mapsto-a$ of $\mathbb Z_q$; hence $F_q^4-q^2I=0$. The polynomial $x^4-q^2$ has no multiple roots, so $F_q$ is diagonalizable, and its eigenvalues lie in the set $\{\pm\sqrt q,\pm i\sqrt q\}$. The space $\mathbb C^{\mathbb Z_q}$ is the direct sum of the corresponding four eigenspaces, so by the pigeonhole principle, for one of these four numbers, which we denote by $\lambda_q$, the dimension of the corresponding eigenspace is at least $q/4$. Put $B_q=F_q-\lambda_qI$. Then $\dim\ker B_q\ge q/4$, that is, $\rank B_q\le3q/4$. Each row of the difference $F_q-B_q=\lambda_qI$ contains exactly one nonzero entry, and its modulus is $\sqrt q$, so all the rows of this difference have one and the same norm and
\[
\norm{F_q-B_q}_{\ell_p(\mathbb Z_q)}=\norm{\lambda_qI}_{\ell_p(\mathbb Z_q)}
=\Bigl(\frac{q^{p/2}}q\Bigr)^{1/p}=q^{\frac12-\frac1p}=q^{-\alpha}.
\]
The third estimate follows from the triangle inequality and the equality $\norm{F_q}_{\ell_p(\mathbb Z_q)}=1$.
\end{proof}

The matrices $F_q$ are glued together into a single modulus by the Chinese remainder theorem. If $M_0=M_1M_2$ and $(M_1,M_2)=1$, then, taking integers $c_1,c_2$ with $c_1M_2+c_2M_1=1$ and setting $a_i=a\bmod M_i$, $b_i=c_ib\bmod M_i$, we obtain from $1/M_0=c_1/M_1+c_2/M_2$ that 
\[
e(ab/M_0)=e(a_1b_1/M_1)\,e(a_2b_2/M_2).
\]
The substitution $a\mapsto(a_1,a_2)$ is bijective by the Chinese remainder theorem, and the substitution $b\mapsto(b_1,b_2)$ is bijective because it follows from $c_1M_2\equiv1\pmod{M_1}$ and $c_2M_1\equiv1\pmod{M_2}$ that the residues $c_1$ and $c_2$ are invertible modulo $M_1$ and $M_2$, respectively. Hence $F_{M_0}$ is obtained from $F_{M_1}\otimes F_{M_2}$ by permutations of rows and columns, and the same is true, by induction, for any number of pairwise coprime factors. We shall call the new indices tensor coordinates. The averaged norm is multiplicative with respect to the tensor product: for vectors, 
\[
\norm{u\otimes v}_{\ell_p(\mathbb Z_{M_1M_2})}
=\norm{u}_{\ell_p(\mathbb Z_{M_1})}\norm{v}_{\ell_p(\mathbb Z_{M_2})},
\]
as is seen from $\frac1{M_1M_2}\sum_{a,b}|u_av_b|^p =\bigl(\frac1{M_1}\sum_a|u_a|^p\bigr)\bigl(\frac1{M_2}\sum_b|v_b|^p\bigr)$. The rows of the matrix $A_1\otimes A_2$ are precisely the tensor products of the rows of the matrix $A_1$ with the rows of the matrix $A_2$, and every such pair occurs, so taking the maximum over the rows gives the same for the matrix norm~\eqref{eq:matrix-norm}:
\begin{equation}\label{eq:mult}
\norm{A_1\otimes A_2}_{\ell_p(\mathbb Z_{M_1M_2})}
=\norm{A_1}_{\ell_p(\mathbb Z_{M_1})}\norm{A_2}_{\ell_p(\mathbb Z_{M_2})}.
\end{equation}

\begin{lemma}\label{lem:tensor}
Let the integers $q_1,\ldots,q_m\ge2$ be pairwise coprime and let $M=q_1\cdots q_m$. Then there exists a matrix $B$ of order $M$ such that $\rank B\le(3/4)^mM$ and
\begin{equation}\label{eq:tensor-error}
\norm{F_M-B}_{\ell_p(\ZM)}
\le\prod_{j=1}^m\bigl(1+q_j^{-\alpha}\bigr)-1.
\end{equation}
\end{lemma}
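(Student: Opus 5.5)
We take $B'=B_{q_1}\otimes\cdots\otimes B_{q_m}$, with $B_{q_j}$ from Lemma~\ref{lem:block}, and let $B$ be the matrix obtained from $B'$ by the same row and column permutations that carry $F_{q_1}\otimes\cdots\otimes F_{q_m}$ to $F_M$. These exist by the Chinese remainder theorem argument above, iterated over the pairwise coprime factors. Permutations preserve both rank and the norm~\eqref{eq:matrix-norm}, so it suffices to estimate $B'$ against $F'=F_{q_1}\otimes\cdots\otimes F_{q_m}$.

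For the rank, $\rank(A_1\otimes A_2)=\rank A_1\cdot\rank A_2$. This gives
\[
\rank B\le\prod_j\tfrac34q_j=(3/4)^mM.
\]

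For the error, we write $F_{q_j}=B_{q_j}+D_j$ with $D_j=\lambda_{q_j}I$ and $\norm{D_j}=q_j^{-\alpha}$. Expanding the tensor product multilinearly gives
\[
F'-B'=\sum_{\emptyset\ne S\subseteq\{1,\dots,m\}}\ \bigotimes_{j=1}^m C_j^S,
\]
where $C_j^S=D_j$ for $j\in S$ and $C_j^S=B_{q_j}$ otherwise. We then apply the triangle inequality together with the multiplicativity~\eqref{eq:mult}, iterated over $m$ factors. The bounds $\norm{D_j}=q_j^{-\alpha}$ and $\norm{B_{q_j}}\le1+q_j^{-\alpha}$ then yield
\[
\sum_{S\ne\emptyset}\prod_{j\in S}q_j^{-\alpha}\prod_{j\notin S}(1+q_j^{-\alpha}).
\]
This crude bound exceeds the target, so the expansion has to be done the other way. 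We use the telescoping form
\[
F'-B'=\sum_{k=1}^m B_{q_1}\otimes\cdots\otimes B_{q_{k-1}}\otimes D_k\otimes F_{q_{k+1}}\otimes\cdots\otimes F_{q_m}.
\]
Since $\norm{F_{q_j}}=1$, this gives
\[
\norm{F'-B'}\le\sum_k q_k^{-\alpha}\prod_{j<k}(1+q_j^{-\alpha}).
\]
The sum telescopes to $\prod_j(1+q_j^{-\alpha})-1$, because $x_k\prod_{j<k}(1+x_j)=\prod_{j\le k}(1+x_j)-\prod_{j<k}(1+x_j)$. That is exactly~\eqref{eq:tensor-error}.

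The step needing care is this choice of telescoping expansion, keeping unperturbed factors $F_{q_j}$ of norm $1$ on one side, rather than the full binomial expansion. The rest is bookkeeping: the permutations commute with tensor products coordinatewise, and the norm~\eqref{eq:mult} is multiplicative over several factors by induction.
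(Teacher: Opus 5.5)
Your proposal is correct and follows the paper's proof essentially verbatim. It uses the same $B'=\bigotimes_j B_{q_j}$ transported by the Chinese-remainder permutations, multiplicativity of rank, and the same telescoping expansion with $B_{q_i}$ before and $F_{q_i}$ after the perturbed slot, summed via $\sum_j x_j\prod_{i<j}(1+x_i)=\prod_j(1+x_j)-1$; the detour through the full multilinear expansion is unnecessary, and you rightly discarded it as giving a weaker bound.
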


\begin{proof}
In tensor coordinates put $B'=\bigotimes_{j=1}^mB_{q_j}$ with the matrices of Lemma~\ref{lem:block}, and bring $B'$ back to the usual indexing by the same permutations of rows and columns; this yields $B$. Permutations change neither the rank nor the norm. The rank of a tensor product is the product of the ranks, whence the first estimate. Further, the telescoping expansion gives
\[
\bigotimes_{j}F_{q_j}-\bigotimes_{j}B_{q_j}
=\sum_{j=1}^m\Bigl(\bigotimes_{i<j}B_{q_i}\Bigr)\otimes(F_{q_j}-B_{q_j})
\otimes\Bigl(\bigotimes_{i>j}F_{q_i}\Bigr).
\]
By Lemma~\ref{lem:block} and~\eqref{eq:mult} the norm of the $j$th summand does not exceed $\bigl(\prod_{i<j}(1+q_i^{-\alpha})\bigr)q_j^{-\alpha}$, since $\norm{F_{q_i}}_{\ell_p(\mathbb Z_{q_i})}=1$. Summing by the triangle inequality and using the identity
\[
\sum_{j=1}^m\Bigl(\prod_{i<j}(1+x_i)\Bigr)x_j=\prod_{j=1}^m(1+x_j)-1,
\]
we obtain~\eqref{eq:tensor-error}.
\end{proof}

This immediately implies an estimate for the width of the discrete system.

\begin{corollary}\label{cor:discrete}
Let $M$ be the product of pairwise coprime numbers $q_1,\ldots,q_m\ge2$. Then
\begin{equation}\label{eq:discrete-width}
d_{\lfloor(3/4)^mM\rfloor}\bigl(\mathcal E^M,\ell_p(\ZM)\bigr)
\le\prod_{j=1}^m\bigl(1+q_j^{-\alpha}\bigr)-1
\le\exp\Bigl(\sum_{j=1}^mq_j^{-\alpha}\Bigr)-1.
\end{equation}
\end{corollary}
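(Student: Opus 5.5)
The plan is to read off the first inequality in~\eqref{eq:discrete-width} directly from Lemma~\ref{lem:tensor}, and then get the second from an elementary exponential bound.

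Take the matrix $B$ given by Lemma~\ref{lem:tensor} for the factorization $M=q_1\cdots q_m$. Its rank is at most $(3/4)^mM$. Since the rank is an integer, it is in fact at most $n:=\lfloor(3/4)^mM\rfloor$. Let $\mathcal L\subset\mathbb C^{\ZM}$ be the row space of $B$, so that $\dim\mathcal L\le n$.

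Next, index the rows of $F_M$ by $a\in\ZM$. Then row $a$ of $F_M$ is exactly the character $\varepsilon_a$, and row $a$ of $B$, call it $y_a$, lies in $\mathcal L$. By the row-wise definition~\eqref{eq:matrix-norm} of the matrix norm,
\[
\norm{\varepsilon_a-y_a}_{\ell_p(\ZM)}\le\norm{F_M-B}_{\ell_p(\ZM)}\le\prod_{j=1}^m\bigl(1+q_j^{-\alpha}\bigr)-1
\]
for every $a$. Taking the supremum over $a$ bounds the width $d_n(\mathcal E^M,\ell_p(\ZM))$, because $\mathcal L$ is an admissible subspace of dimension at most $n$.

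The second inequality follows termwise from $1+x\le e^x$ applied with $x=q_j^{-\alpha}\ge0$, and then taking the product over $j$.

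There is no real obstacle here. The only points needing care are two bookkeeping checks. First, the rows of $F_M$ in the usual indexing are precisely the characters $\varepsilon_a$; this holds because $B$ was brought back to the usual indexing in Lemma~\ref{lem:tensor}. Second, the rank bound must be replaced by its integer part to match the subscript $\lfloor(3/4)^mM\rfloor$.
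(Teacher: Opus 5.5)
Your proposal is correct and follows the paper's proof essentially step for step: you take the row space of the matrix $B$ from Lemma~\ref{lem:tensor}, use integrality of the rank to reach $\lfloor(3/4)^mM\rfloor$, bound each row distance by the row-wise matrix norm, and finish with $1+x\le e^x$. (Minor wording point: the rows of $F_M$ are the characters $\varepsilon_a$ by definition; the fact that $B$ is returned to the usual indexing is what makes row $a$ of $F_M-B$ equal to $\varepsilon_a-y_a$.)
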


\begin{proof}
Denote by $B$ the matrix of Lemma~\ref{lem:tensor} and by $\mathcal L$ the linear span of its rows, so that $\dim\mathcal L=\rank B\le\lfloor(3/4)^mM\rfloor$, since the rank is an integer. Row $a$ of the matrix $F_M$ is the vector $\varepsilon_a$, while row $a$ of the matrix $B$ lies in $\mathcal L$ and differs from $\varepsilon_a$, in the norm of $\ell_p(\ZM)$, by no more than the left-hand side of~\eqref{eq:tensor-error}, since by definition~\eqref{eq:matrix-norm} the norm of every row of the matrix $F_M-B$ does not exceed the norm of this matrix itself. Therefore the distance from $\varepsilon_a$ to $\mathcal L$ does not exceed this quantity. The last estimate in~\eqref{eq:discrete-width} follows from the inequality $1+x\le e^x$.
\end{proof}

It remains to choose a modulus $M$ of the required size with a large number of sufficiently large prime divisors. Here we use Bertrand's postulate~\cite[\S~22.3]{10}, according to which for any integer $h\ge1$ there exists a prime $q$ with $h<q\le 2h$.

\begin{lemma}\label{lem:modulus}
Let an integer $m\ge2$, an integer $Q\ge3$ and a real number $Z>0$ satisfy the condition
\begin{equation}\label{eq:budget}
2^{m^2}Q^m\le Z^{1/10}.
\end{equation}
Then there exists a square-free $M$ with exactly $m$ prime divisors, all of them
greater than $Q$, such that
\[
Z<M\le2Z,
\qquad
\sum_{q\mid M}q^{-\alpha}\le\frac{Q^{-\alpha}}{1-2^{-\alpha}}\le\frac2\alpha\,Q^{-\alpha}.
\]
\end{lemma}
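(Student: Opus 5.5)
Choose the primes greedily with geometrically growing sizes, using Bertrand's postulate: $m-1$ primes in dyadic blocks above $Q$, then one last prime to land $M$ in $(Z,2Z]$.

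\emph{First $m-1$ primes.} For $j=1,\dots,m-1$ let $q_j$ be a prime with $2^{j-1}Q<q_j\le 2^jQ$. These primes are distinct because the intervals $(2^{j-1}Q,2^jQ]$ are disjoint, and all exceed $Q$. Their product satisfies
\[
P=q_1\cdots q_{m-1}\le 2^{1+2+\dots+(m-1)}Q^{m-1}\le 2^{m^2}Q^{m-1}.
\]
By \eqref{eq:budget}, $P\le Z^{1/10}/Q$.

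\emph{Last prime.} Put $h=\lfloor Z/P\rfloor$. Choose $q_m$ prime with $h<q_m\le 2h$, and set $M=Pq_m$. Then $M>Ph\ge$ roughly $Z-P$ and $M\le 2Ph\le 2Z$. The lower end needs care: $q_m>h$ gives $q_m\ge h+1>Z/P$, so $M>Z$ exactly. For this we need $h\ge1$, which holds since $P\le Z^{1/10}\le Z$. Indeed \eqref{eq:budget} forces $Z\ge 2^{40}$.

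\emph{Separation of $q_m$ from the others.} We need $q_m>Q$ and $q_m$ distinct from $q_1,\dots,q_{m-1}$. It suffices that
\[
q_m>Z/P\ge Z^{9/10}Q\ge 2^{m-1}Q\ge q_{m-1},
\]
which holds because $Z^{9/10}\ge Z^{1/10}\ge 2^{m^2}$. So $M$ is square-free with exactly $m$ prime divisors, all greater than $Q$.

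\emph{The sum.} We have $q_j>2^{j-1}Q$ for $j\le m-1$, and also $q_m>2^{m-1}Q$. Hence
\[
\sum_{q\mid M} q^{-\alpha}<Q^{-\alpha}\sum_{j\ge0}2^{-j\alpha}=\frac{Q^{-\alpha}}{1-2^{-\alpha}}.
\]
For the final inequality, use $1-2^{-\alpha}=1-e^{-\alpha\ln 2}\ge \alpha\ln2/(1+\alpha\ln 2)\ge \alpha/2$ for $\alpha\le1/2$. Alternatively, by concavity, $1-2^{-\alpha}\ge 2\alpha(1-2^{-1/2})\ge \alpha/2$ on $[0,1/2]$.

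The only delicate point is the bookkeeping that $q_m$ is both large enough to exceed the earlier primes and small enough that $M\le2Z$. Both follow once the crude bound $P\le 2^{m^2}Q^{m-1}$ is in hand. The exponent $1/10$ in \eqref{eq:budget} is far more generous than this argument needs.
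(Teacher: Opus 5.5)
Your proof is correct and follows essentially the same route as the paper: Bertrand primes in the dyadic blocks $(2^{j-1}Q,2^jQ]$, then a last Bertrand prime in $(h,2h]$ with $h=\lfloor Z/P\rfloor$ that is separated from the others via $Z/P\ge Z^{9/10}$, and finally the geometric-series bound on $\sum q^{-\alpha}$. The differences are cosmetic: you keep the extra factor in $P\le Z^{1/10}/Q$, and you bound $1-2^{-\alpha}\ge\alpha/2$ via $1-e^{-x}\ge x/(1+x)$ (or concavity) instead of the paper's second-order Taylor estimate.
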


\begin{proof}
By Bertrand's postulate, for $j=1,\ldots,m-1$ we choose a prime $q_j$ in the interval $(2^{j-1}Q,2^jQ]$, which is admissible since $2^{j-1}Q$ is a positive integer. These intervals are pairwise disjoint, so all the $q_j$ are distinct and greater than $Q$. For $P=\prod_{j<m}q_j$ we have $P\le2^{m(m-1)/2}Q^{m-1}\le2^{m^2}Q^m\le Z^{1/10}$. From~\eqref{eq:budget} and the inequalities $m\ge2$, $Q\ge3$ it follows that $Z^{1/10}\ge2^4\cdot3^2>1$, in particular, $Z>1$ and $P\le Z^{1/10}\le Z$. Hence the integer $h=\lfloor Z/P\rfloor$ is at least $1$, and one more application of Bertrand's postulate yields a prime $q_m$ with $h<q_m\le2h$. Since $q_m$ is an integer, this gives
\[
\frac ZP<h+1\le q_m\le2h\le\frac{2Z}P.
\]
Since $q_m>Z/P\ge Z^{9/10}\ge Z^{1/10}\ge2^{m-1}Q$, the number $q_m$ is distinct from all the $q_j$, $j<m$, and is greater than $Q$. Hence $M=Pq_m$ is square-free and has exactly $m$ prime divisors, and $Z<M\le2Z$. Finally, $q_j>2^{j-1}Q$ for all $j\le m$, whence
\[
\sum_{q\mid M}q^{-\alpha}\le Q^{-\alpha}\sum_{j\ge1}2^{-(j-1)\alpha}
=\frac{Q^{-\alpha}}{1-2^{-\alpha}}.
\]
It remains to note that for $0<\alpha\le\frac12$
\[
1-2^{-\alpha}\ge\alpha\log2-\frac{(\alpha\log2)^2}2
\ge\alpha\log2\Bigl(1-\frac{\log2}4\Bigr)>\frac\alpha2.
\]
\end{proof}

\begin{theorem}\label{thm:main}
Let $1\le p<2$, $\alpha=\frac1p-\frac12$ and
\[
\log N\ \ge\ \frac{324}\alpha\max\Bigl(50,\ \log\frac4\alpha\Bigr)^2.
\]
Put $\theta=\frac1{18}\sqrt{\alpha\log N}$. Then there exists an integer
$n=n(N,p)$ for which
\begin{equation}\label{eq:main}
n\le2N e^{-\theta},
\qquad
d_{n}\bigl(\mathcal E_N,L_p(\T)\bigr)\le\frac8\alpha\,e^{-\theta}.
\end{equation}
In particular, for each fixed $p\in[1,2)$
\[
n(N,p)\le2N\exp\Bigl(-\tfrac1{18}\sqrt{\alpha\log N}\Bigr)=o(N),
\qquad
d_{n(N,p)}\bigl(\mathcal E_N,L_p(\T)\bigr)\to0,
\]
that is, the trigonometric system is not rigid in $L_p(\T)$.
\end{theorem}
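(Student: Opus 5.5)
We combine Lemma~\ref{lem:step}, Corollary~\ref{cor:discrete} and Lemma~\ref{lem:modulus}. The parameters are chosen so that all three error sources are of order $e^{-\theta}$:
\begin{itemize}
\item the discretization error $2\pi N/M$;
\item the tensor error $\exp\bigl(\sum q^{-\alpha}\bigr)-1$;
\item the dimension loss $(3/4)^m M$ relative to $N$.
\end{itemize}

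The choices are as follows. Take $Z=Ne^{\theta}$, so that $Z<M\le 2Z$ gives $2\pi N/M<2\pi e^{-\theta}$. Take $Q=\lceil e^{2\theta/\alpha}\rceil$, so that $Q^{-\alpha}\le e^{-2\theta}$ and Lemma~\ref{lem:modulus} gives $\sum_{q\mid M}q^{-\alpha}\le\frac2\alpha e^{-2\theta}$. Take $m=\lceil 4\theta\rceil$, or any integer with $(3/4)^m\le e^{-2\theta}$; since $\log(4/3)>1/4$, $m\ge 8\theta$ certainly suffices. Then
\[
n=\lfloor(3/4)^mM\rfloor\le 2Ze^{-2\theta}=2Ne^{-\theta}.
\]
Applying Corollary~\ref{cor:discrete} to the primes dividing $M$ and then Lemma~\ref{lem:step} gives
\[
d_n(\mathcal E_N,L_p(\T))\le \exp\Bigl(\tfrac2\alpha e^{-2\theta}\Bigr)-1+2\pi e^{-\theta}.
\]
Provided $\frac2\alpha e^{-2\theta}\le1$, we have $e^x-1\le 2x$, so the first term is at most $\frac4\alpha e^{-2\theta}\le\frac4\alpha e^{-\theta}$. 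Adding $2\pi e^{-\theta}\le\frac{4\pi\cdot\frac12}{\alpha}e^{-\theta}$, which holds because $\alpha\le\frac12$, gives a total below $\frac8\alpha e^{-\theta}$. The constants may need slight retuning, for instance choosing the exponent in $Q$ so that this closes cleanly. The hypothesis $\theta\ge 50$ and $\theta\ge\log\frac4\alpha$ makes these side conditions trivial.

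The main obstacle is verifying the budget condition \eqref{eq:budget}, $2^{m^2}Q^m\le Z^{1/10}$, with $Z\ge N$. Taking logarithms, we need
\[
m^2\log2+m\log Q\le\tfrac1{10}\log N.
\]
With $m\approx 8\theta$ and $\log Q\approx 2\theta/\alpha$, the left side is about $64\theta^2\log 2+16\theta^2/\alpha\le 61\theta^2/\alpha$, using $\alpha\le\frac12$ together with the ceiling slack. Since $\theta^2=\alpha\log N/324$, this is $\frac{61}{324}\log N$, which exceeds $\frac1{10}\log N$.

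So the constants must be tuned more economically. One option is $m=\lceil 4\theta\rceil$ with $(3/4)^m\le e^{-\theta}$, i.e.\ $m\ge\theta/\log(4/3)\approx3.5\theta$. One then obtains only $n\le 2Ze^{-\theta}$ and must take $Z=N$, compensating for the discretization term $2\pi N/M$ by other means. The cleanest route is to take $Z=Ne^{\theta}$ with $\theta$ small relative to the budget and to check the arithmetic carefully. I would first try $m=\lceil 5\theta\rceil$, giving $(3/4)^m\le e^{-1.4\theta}$, which leaves $e^{0.4\theta}$ of room for $Z$. I would also try $Q$ with $\log Q=\theta/\alpha+O(1)$, so that $Q^{-\alpha}\approx e^{-\theta}$, and keep $Z=Ne^{0.4\theta}$. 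Then $2\pi N/M\le 2\pi e^{-0.4\theta}$, which is too weak. This suggests the true bookkeeping uses $\log Q\approx\theta/\alpha$, $m\approx 3.5\theta$ and $Z$ comparable to $Ne^{\theta}$, with the budget $12.25\theta^2\log2+3.5\theta^2/\alpha\le(8.5+3.5)\theta^2/\alpha\cdot$, i.e.\ about $12\cdot\frac{\log N}{324}<\frac{\log N}{10}$, which fits.

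The remaining work is to fix integer roundings and to check each step against the stated lower bound on $\log N$. That bound guarantees $\theta\ge50$, $Q\ge3$, $m\ge2$, and that the roundings cost only constant factors.
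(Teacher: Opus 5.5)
\textbf{Gap: the final parameter choice breaks the dimension bound.} Your architecture is the paper's: Lemma~\ref{lem:step}, Corollary~\ref{cor:discrete} and Lemma~\ref{lem:modulus} with $Z=Ne^{\theta}$. But the proposal never reaches parameters that satisfy all three constraints at once, and the set you settle on fails. With $m\approx3.5\theta$ you get only $(3/4)^m\approx e^{-1.007\theta}$. This merely cancels the factor $e^{\theta}$ in $M\le2Z=2Ne^{\theta}$, so $n=\lfloor(3/4)^mM\rfloor\le2Ne^{-0.007\theta}$ at best, nowhere near $2Ne^{-\theta}$. You noticed this obstruction yourself a few lines earlier, when you said that $m\approx4\theta$ forces $Z=N$. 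Your opening choice ``$m=\lceil4\theta\rceil$, or any integer with $(3/4)^m\le e^{-2\theta}$'' has the same problem: it conflates two incompatible requirements. With $Z=Ne^{\theta}$ you need $m\ge2\theta/\log\frac43\approx6.95\,\theta$.

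\textbf{The misdiagnosis and the fix.} The budget failure you found is caused by $\log Q\approx2\theta/\alpha$, not by $m\approx8\theta$. There is no need to push the tensor error down to $O(e^{-2\theta})$, because the discretization error $2\pi N/M<2\pi e^{-\theta}$ is already of order $e^{-\theta}$. Take $Q=\lceil e^{\theta/\alpha}\rceil$ and keep $m=\lceil8\theta\rceil$, as the paper does. Then $m\le8.02\,\theta$ and $\log Q\le1.02\,\theta/\alpha$, and using $1/\alpha\ge2$,
\[
m^2\log2+m\log Q\le44.6\,\theta^2+8.19\,\frac{\theta^2}{\alpha}\le30.5\,\frac{\theta^2}{\alpha}<32.4\,\frac{\theta^2}{\alpha}=\frac1{10}\log N\le\frac1{10}\log Z,
\]
so condition~\eqref{eq:budget} holds. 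The remaining steps then go through:
\begin{itemize}
\item $(3/4)^m\le e^{-m/4}\le e^{-2\theta}$, so $n\le2Ne^{-\theta}$.
\item $s=\sum_{q\mid M}q^{-\alpha}\le\frac2\alpha e^{-\theta}\le\frac12$, because $\theta\ge\log\frac4\alpha$; hence $e^s-1\le2s\le\frac4\alpha e^{-\theta}$.
\item Adding $2\pi e^{-\theta}\le\frac{3.2}{\alpha}e^{-\theta}$ gives the width bound $\frac8\alpha e^{-\theta}$.
\end{itemize}
The ``in particular'' clause then follows from $\theta\to\infty$.
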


\begin{proof}
The condition on $N$ means precisely that $\theta\ge\max(50,\log(4/\alpha))$.
Put
\[
m=\lceil8\theta\rceil,
\qquad
Q=\bigl\lceil e^{\theta/\alpha}\bigr\rceil,
\qquad
Z=Ne^{\theta}.
\]
The numbers $m$ and $Q$ are integers, and $m\ge2$, $Q\ge3$.

We first verify condition~\eqref{eq:budget}. Since $\theta\ge50$, we have
$m\le8\theta+1\le8.02\,\theta$, and since $\theta/\alpha\ge\theta\ge50$ and
$\log(x+1)\le\log x+1/x$, we have $\log Q\le\frac\theta\alpha+1\le1.02\frac\theta\alpha$.
Therefore, using $1/\alpha\ge2$,
\[
m^2\log2+m\log Q
\le44.6\,\theta^2+8.19\frac{\theta^2}\alpha
\le\frac{\theta^2}\alpha\bigl(22.3+8.19\bigr)
\le30.5\,\frac{\theta^2}\alpha.
\]
But $\theta^2=\frac{\alpha\log N}{324}$, so the right-hand side does not exceed
$\frac{30.5}{324}\log N<\frac1{10}\log N\le\frac1{10}\log Z$, which is exactly
condition~\eqref{eq:budget}.

Thus, by Lemma~\ref{lem:modulus} there is a square-free $M=q_1\cdots q_m$ with $Z<M\le2Z$ and all $q_j>Q\ge e^{\theta/\alpha}$, and for the sum
$s=\sum_{j=1}^mq_j^{-\alpha}$ we have
\[
s\le\frac2\alpha Q^{-\alpha}\le\frac2\alpha e^{-\theta}\le\frac12,
\]
since $\theta\ge\log(4/\alpha)$. Put $n=\lfloor(3/4)^mM\rfloor$.

Let us estimate this dimension. The inequality $\log x\ge1-1/x$ with $x=4/3$ implies
$\log\frac43\ge\frac14$, so $(3/4)^m\le e^{-m/4}\le e^{-2\theta}$. Together with $M\le2Z=2Ne^{\theta}$ this gives $n\le2Ne^{-\theta}$.

It remains to estimate the width. By Corollary~\ref{cor:discrete} and the inequality
$e^s-1\le se^{s}\le2s$ (valid for $s\le\frac12$)
\[
d_n\bigl(\mathcal E^M,\ell_p(\ZM)\bigr)\le e^s-1\le2s\le\frac4\alpha e^{-\theta}.
\]
Moreover, $2\pi N/M<2\pi N/Z=2\pi e^{-\theta}\le\frac{3.2}\alpha e^{-\theta}$, again because $1/\alpha\ge2$. Lemma~\ref{lem:step} gives
\[
d_n\bigl(\mathcal E_N,L_p(\T)\bigr)
\le\frac4\alpha e^{-\theta}+\frac{3.2}\alpha e^{-\theta}
\le\frac8\alpha e^{-\theta},
\]
which, together with the estimate for the dimension, yields~\eqref{eq:main}.
\end{proof}

\begin{remark}
All the subspaces above are complex; a passage to the real case is available. Let
$L_p(\T;\mathbb R)$ be the space of real-valued functions in $L_p(\T)$, let $d^{\mathbb R}_n$ be
the width with respect to real subspaces of $L_p(\T;\mathbb R)$ of dimension $\le n$, and let
\[
\mathcal T_N=\{\cos2\pi kt\}_{0\le k\le N}\cup\{\sin2\pi kt\}_{1\le k\le N}\subset L_p(\T;\mathbb R).
\]
If a complex subspace $Y\subset L_p(\T)$ with $\dim_{\mathbb C}Y\le n$ approximates $\mathcal E_N$
with error $\delta$, then $Y_{\mathbb R}=\operatorname{Re}Y$ is a real subspace of
$L_p(\T;\mathbb R)$ with $\dim_{\mathbb R}Y_{\mathbb R}\le\dim_{\mathbb R}Y\le2n$ (the map
$g\mapsto\operatorname{Re}g$ is $\mathbb R$-linear), and it approximates $\mathcal T_N$ with the
same error: indeed
\[
\cos2\pi kt=\operatorname{Re}e_k,
\qquad
\sin2\pi kt=\operatorname{Re}(-ie_k),
\]
while $g$ and $-ig$ run over $Y$ simultaneously, and
$\norm{\operatorname{Re}f-\operatorname{Re}g}_{L_p(\T)}\le\norm{f-g}_{L_p(\T)}$ because
$|\operatorname{Re}z|\le|z|$. Hence
\begin{equation}\label{eq:real}
d^{\mathbb R}_{2n}\bigl(\mathcal T_N,L_p(\T;\mathbb R)\bigr)\le d_n\bigl(\mathcal E_N,L_p(\T)\bigr),
\end{equation}
so Theorem~\ref{thm:main} also holds for $\mathcal T_N$ with $2n$ in place of $n$.
\end{remark}

\textbf{AI disclosure statement.} Kimi K3 was used as a discussion partner. The starting point, namely the decision to work with the discrete Fourier matrix, was the author's and was prompted by the paper of Dvir and Liu. The specific plan of the proof took shape in discussion with the model. All proofs were then carried out and verified by the author, who takes full responsibility for the content of this note.

\end{document}